\journal{arXiv.org}
\begin{document}

\newtheorem{theorem}{Theorem}[section]
\newtheorem{lemma}[theorem]{Lemma}
\newtheorem{proposition}[theorem]{Proposition}
\newtheorem{corollary}[theorem]{Corollary}
\newtheorem{definition}{Definition}[section]

{\theoremstyle{definition}
\newtheorem{remark}[theorem]{Remark}
\newtheorem{example}[theorem]{Example}
\newtheorem{remarks}[theorem]{Remarks}
}

\hyphenation{con-ti-nuous}
\hyphenation{Theo-rem}

\def\N{\mathbb N}
\def\C{\mathbb C}
\def\R{\mathbb R}
\def\D{\mathbb D}
\def\T{\mathbb T}
\def\Q{\mathbb Q}
\def\la{\lambda}
\def\dis{\displaystyle}
\def\ovl{\overline}
\def\wt{\widetilde}
\def\eps{\varepsilon}

\begin{frontmatter}



\title{Algebraic structure of continuous, unbounded and integrable functions}


\author{M.C.~Calder\'on-Moreno\fnref{sup}}
\ead{mccm@us.es}
\author{P.J.~Gerlach-Mena\fnref{sup}}
\ead{gerlach@us.es}
\author{J.A.~Prado-Bassas\corref{cor1}\fnref{sup}}
\ead{bassas@us.es}
\cortext[cor1]{Corresponding author}
\fntext[sup]{The authors have been partially supported by the Plan Andaluz de Investigaci\'on de la Junta de Andaluc\'{\i}a FQM-127, Grant P08-FQM-03543 and by MINECO Grant MTM2015-65242-C2-1-P.}

\address{Dpto. An\'alisis Matem\'atico, Fac. Matem\'aticas, Universidad de Sevilla. \\
Avda.~Reina Mercedes s/n, 41080 Sevilla, Spain.}

\begin{abstract}
In this paper we study the large linear and algebraic size of the family of unbounded continuous and integrable functions in $[0,+\infty)$ and of the family of sequences of these functions converging to zero uniformly on compacta and in $L^1$-norm. In addition, we concentrate on the speed at which these functions grow, their smoothness and the strength of their convergence to zero.
\end{abstract}

\begin{keyword}
Continuous unbounded functions, integrable functions, lineability, algebrability.
\MSC[2010] 15A03 \sep 26A15 \sep 46E30
\end{keyword}

\end{frontmatter}



\section{Introduction}

Contrary to what happens with a series of real numbers, there are even integrable functions on $[0,+\infty)$ that do not converge pointwise to zero as $x\to+\infty$. In fact, it is easy to construct unbounded, continuous and integrable functions on $[0,+\infty)$ (see Example \ref{ejemplo1}). In this paper we will analyze the existence of large algebraic structures of sets of such functions and of sequences of these functions converging to zero, in a manner to be specified later.

We will first establish some notation. As is standard, we will denote by $\N$ and $\R$ the set of positive integers and the real line, respectively. For any interval $I\subset[0,+\infty)$, by ${\mathcal C}(I)$ we will denote the vector space of all continuous functions on $I$; in the special case $I=[0,+\infty)$, the space ${\mathcal C}([0,+\infty))$ will be endowed with the topology of the uniform convergence on compacta, under which ${\mathcal C}([0,+\infty))$ becomes a complete metrizable topological vector space or $F$-space. Also, if $I\subset [0,+\infty)$ is any interval, $L^1(I)$ will stand for the space of all Lebesgue integrable functions on $I$, endowed with its usual topology of the $L^1$-norm $\|f\|_I=\int_I|f(x)|dx$.

The general aim of this paper is the search for linear structures within a nonlinear setting, a trending branch that has attracted the attention of many mathematicians within the last decade. The concept of {\em lineability} was coined by the Russian mathematician V.I. Gurariy \cite{gurariy1996}. He later partnered with R.M. Aron, J.B. Seoane-Sep\'ulveda {\em et al.} \cite{arongarciaperezseoane2009}, the concept of \emph{algebrability} (see \cite{aronperezseoane2006}), which relates to the algebraic size of a set. We refer to the survey \cite{bernalpellegrinoseoane2014} and the book \cite{aronbernalpellegrinoseoane2015} for a wide background on this topic. Specifically, we introduce the next definitions that will be used later.

Given a vector space $X$ and a subset $A \subset X$, we say that $A$ is:
\begin{itemize}
\item lineable whenever there is an infinite dimensional vector space $M$ such that $M \setminus \{0\} \subset A$;
\item $\alpha$-lineable if, in addition, $\text{dim}(M) = \alpha$;
\item maximal lineable if $A$ is $\text{dim}(X)$-lineable.
\end{itemize}
Moreover, if $X$ is a topological vector space, then $A$ is said to be:
\begin{itemize}
\item dense-lineable if there is a dense vector space $M$ with $M \setminus \{0\} \subset A$;
\item maximal dense-lineable if we also have that $\text{dim}(M) = \text{dim}(X)$.
\end{itemize}
When $X$ is a topological vector space contained in some (linear) algebra, then $A$ is called:
\begin{itemize}
\item algebrable if there is an algebra $M$ so that $M \setminus \{0\} \subset A$ and $M$ is infinitely generated, that is, the cardinality of any system of generators of $M$ is infinite;
\item strongly algebrable if the algebra $M$ above can be taken free.
\end{itemize}
If the algebraic structure of $X$ is commutative, the strong algebrability is equivalent to the existence of a generating set $B$ of the algebra $M$ such that for any $s \in \N$, any nonconstant polynomial $P$ in $s$ variables and any distinct $f_1,\ldots,f_s \in B$, we have $P(f_1, \ldots ,f_s) \in M\setminus\{0\}$.

Focusing our attention on lineability of continuous functions, Gurariy \cite{gurariy1996} stated the lineability of the set of functions that are continuous on $[0,1]$ but nowhere differentiable. Later in 2013 (see \cite{jimenezmunozseoane2013}), Jim\'enez-Rodr\'{\i}guez, Mu\~noz-Fern\'andez and Seoane-Sep\'ulveda gave the first constructive proof of the $\mathfrak{c}$-lineability of the so called Weierstrass' Monsters, while Bayart and Quarta \cite{bayartquarta2007} proved the dense algebrability (that is, the existence of a dense infinitely generated algebra) of the more restrictive family of continuous and nowhere H\"older functions on $[0,1]$.

In the same vein, the first and third author, jointly with Bernal \cite{bernalcalderonprado2015}, proved the maximal lineability and strong algebrability of the set of continuous functions $\varphi:[0,1]\to\R^2$ whose image has nonempty interior and a zero-(Lebesgue) measure boundary as well as the maximal dense-lineability of the family of continuous functions from $[0,1]$ to $\R^2$ whose images have positive Lebesgue measure. As a related result, in 2014 Albuquerque \cite{albuquerque2014} states the maximal lineability of the set of continuous surjections from $\R$ to $\R^2$. Extensions of these results can be found in \cite{albuquerquebernalpellegrinoseoane2014,bernalordonez2014}.

But a large linear structure cannot always be obtained for continuous functions with additional properties. In 2004, Gurariy and Quarta showed that the set of continuous functions on $[0,1]$ which admit one and only one absolute maximum is 1-lineable but not 2-lineable. In addition, they also proved that the set of continuous functions on the whole real line satisfying the mentioned property is 2-lineable, while the corresponding subset of continuous functions vanishing at infinity is not 3-lineable.

Turning now to the framework of integrable functions, Mu\~noz, Palmberg, Puglisi and Seoane \cite{munozpalmbergpuglisiseoane2008} proved the $\mathfrak{c}$-lineability of $L^p[0, 1]\setminus L^q[0, 1]$ for $1\le p<q$ and of $L^p(I)\setminus L^q(I)$ for $p>q\ge 1$ and any unbounded interval $I\subset\R$. In \cite{arongarciaperezseoane2009,bernal2010,bernalordonez2013,bernalordonez2014, botelhocariellofavaropellegrinoseoane2013,botelhofavaropellegrinoseoane2012,kitsontimoney2011} some generalizations and extensions of these results are proved.

Concerning the relationship between lineability and integrability, Garc\'{\i}a, Mart\'{\i}n and Seoane showed in 2009 \cite{garciamartinseoane2009} that, given an arbitrary unbounded interval $I\subset \R$, the set of all almost everywhere continuous bounded functions on $I$ which are also not Riemann integrable, contains an infinitely generated closed subalgebra. Moreover, they also proved the existence of an infinite dimensional closed vector space consisting (except for zero) of continuous and bounded functions on an unbounded interval $I$ which are not Riemann integrable; in particular, this set is $\mathfrak{c}$-lineable. In addition, they also state the lineability of the set of Riemann integrable functions (on an unbounded interval) which are not Lebesgue integrable and the $\mathfrak{c}$-lineability (in fact, the existence of a closed infinite dimensional subspace) of the set of Lebesgue integrable functions that are not Riemann integrable. In this direction, Bernal and Ord\'o\~nez \cite{bernalordonez2014} show that the set of continuous and Riemann integrable functions on $[0,+\infty)$ which do not belong to $L^p([0,+\infty))$ for any $0<p<+\infty$ is maximal dense-lineable.

On the other hand, Garc\'{\i}a, Grecu, Maestre and Seoane \cite{garciagrecumaestreseoane2010} were able to show the existence of an infinite dimensional Banach space of bounded and Lebesgue integrable functions in $[a,b]$ with antiderivatives at every point of the interval but, except for zero, are not Riemann integrable.

Finally, in the setting of sequences of functions, there are, up to the authors knowledge, only few and relatively recent results. In fact, in 2014, Bernal and Ord\'o\~nez proved in \cite{bernalordonez2014} that the family of sequences $(f_n)_n\subset\left(\R^\R\right)^\N$ of continuous bounded and integrable functions such that $\|f_n\|_\infty\overset{n\to\infty}{\longrightarrow}{+\infty}$, $\sup\{\|f_n\|_{L^1}:\,n\in\N\}<+\infty$ but $\|f_n\|_{L^1}\not\to 0$ ($n\to\infty$) is maximal lineable. Later, in 2017, Ara\'ujo, Bernal, Mu\~noz, the third author and Seoane \cite{araujobernalmunozpradoseoane2017} were able to show the $\mathfrak{c}$-lineability of the family of sequences of Lebesgue measurable functions $\R\to\R$ such that $f_n$ converges pointwise to zero and $f_n(I)=\R$ for any non-degenerate interval $I\subset\R$ and any $n\in\N$. Additionaly, the maximal dense-lineability (in the vector space of sequences of Lebesgue measurable functions $[0,1]\to\R$) of the family of sequences of Lebesgue measurable functions such that $f_n\to0$ in measure but not almost everywhere in $[0,1]$ is proven.

The aim of this paper is to contribute to the study of lineability and algebrability of continuous and integrable functions and of sequences of them. For that purpose, in Section 2 we focus on the linear and algebraic size of the set of continuous, unbounded and integrable functions on $[0,+\infty)$, paying special attention to the natural topology carried by the framework space. In Section 3, we turn to the problem of the algebraic genericity of the set of sequences of functions in the just mentioned class that tend to zero both in $L^1$-norm and uniformly on compacta. We conclude the paper with a final section devoted to extend some results in the preceding sections by looking for a stronger way of convergence of sequences, the possible rate of growth of the functions, or the smoothness of them.

\section{Unbounded, continuous and integrable functions}

As we said in the Introduction, in the context of continuous and integrable functions on $[0,+\infty)$, intuition leads us to believe that these functions should go somehow to zero when $x$ tends to infinity. This is clearly true if $f$ is decreasing or if it exists $\displaystyle{\lim_{x\to +\infty} f(x)}$, but in general it is not difficult to provide examples of continuous and integrable functions on $[0,+\infty)$ that not only are not close to zero but also satisfy $\displaystyle{\limsup_{x\to +\infty} |f(x)}| = +\infty$. The next example shows a classical undergraduate construction of an unbounded, continuous and integrable function on $[0,+\infty)$ that will be very useful going forward.

\begin{example}\label{ejemplo1}
For any $n \in \N$, we consider the triangular function $T_n : [0,+\infty) \longrightarrow \R$ given by:

$$T_n(x) =
\begin{cases}
n(2^{n+1}x + (1-n2^{n+1}))  & \text{if } x \in \left[n-\tfrac{1}{2^{n+1}},n\right), \\
n(-2^{n+1}x + (1+n2^{n+1})) & \text{if } x \in \left[n,n+\tfrac{1}{2^{n+1}}\right], \\
0                           & \text{otherwise,}
\end{cases}
$$
and the function $f: [0,+\infty) \longrightarrow \R$ defined by joining the previous triangles:
\begin{equation*}\label{EQ1}
f(x) = \sum_{n=1}^{\infty} T_n(x).
\end{equation*}
That is, $f$ ``draws'' all the triangles of height $n$, supported on intervals centered at $n$ with length $\frac{1}{2^n}$ ($n\in\N$).

The function $f$ is continuous by construction, and also integrable. Indeed,
$$\|f\|_{L^1}=\int_0^{\infty}|f(x)|dx=\sum_{n=1}^{\infty} \int_0^{\infty} T_n(x)dx = \sum_{n=1}^{\infty} \dfrac{n}{2^{n+1}}  < +\infty.$$
Furthermore, $f$ is unbounded just by considering the sequence $x_n= n$:
$$f(x_n) = f(n) = n \overset{n\to\infty}{\longrightarrow} +\infty.$$

Observe that with a similar construction we could get the value $\|f\|_{L^1} \in (0,+\infty)$ arbitrarily small, just adjusting the size of the bases of the triangles.
\end{example}

\bigskip

From now on we denote by ${\mathcal A}$ the family of unbounded continuous integrable functions in $[0,+\infty)$, that is,
$$\mathcal{A} = \left\{ f \in \mathcal{C}([0,+\infty)) \cap L^1([0,+\infty)):\,\, \limsup_{x \to  +\infty} |f(x)| = +\infty \right\}.$$

The natural question that now arises is how many functions are there in $\mathcal{A}$ and what the meaning of ``many'' would be.

Observe that ${\mathcal A}$ is not a vector space, just take $f(x)$ and $e^{-x}-f(x)$, where $f\in{\mathcal A}$. Observe also that the maximal dimension of a vector space $M\subset {\mathcal A}\cup\{0\}$ is $\mathfrak{c}$, where $\mathfrak{c}=$the dimension of continuum.

In order to study the algebraic size of the set of these functions we need the next result that will allow us to get dense-lineability. Recall that given two sets $A,\,B$ of a vector space, then $A$ is said to be stronger than $B$ if $A+B\subset A$ (see \cite{arongarciaperezseoane2009,bernalordonez2014}).

\begin{lemma}\label{denso}
Let $X$ be a metrizable topological vector space, $A \subset X$ maximal lineable and $B \subset X$ dense-lineable in $X$ with $A \cap B = \varnothing$. If $A$ is stronger than $B$ then $A$ is maximal dense-lineable.
\end{lemma}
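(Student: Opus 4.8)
The plan is to combine the maximal lineability of $A$ with the dense-lineability of $B$ by a standard "addition trick" that produces a dense subspace of full dimension lying, except for $0$, inside $A$. First I would fix an infinite-dimensional vector space $M_A \subset A \cup \{0\}$ with $\dim M_A = \dim X$ (maximal lineability of $A$) and a dense vector space $M_B \subset B \cup \{0\}$ (dense-lineability of $B$). Since $X$ is metrizable, its topology is given by an $F$-norm (or by a complete translation-invariant metric if $X$ is an $F$-space; in any case a metric $d$), so "dense" is witnessed by sequences, and I would work with the candidate space
$$M = M_A + M_B = \{ a + b : a \in M_A,\ b \in M_B \}.$$
This $M$ is a vector subspace of $X$ because both summands are. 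It is dense because it contains $M_B$, which is already dense. It remains to check two things: that $\dim M = \dim X$, and that $M \setminus \{0\} \subset A$.

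For the dimension, note $\dim M \le \dim X$ trivially, while $\dim M \ge \dim M_A = \dim X$ since $M \supset M_A$; hence $\dim M = \dim X$, i.e. $M$ is maximal. The heart of the argument — and the step I expect to be the only real obstacle — is showing $M \setminus \{0\} \subset A$, and this is exactly where the hypotheses "$A$ is stronger than $B$" and "$A \cap B = \varnothing$" get used. Take $h \in M \setminus \{0\}$, say $h = a + b$ with $a \in M_A$, $b \in M_B$. There are two cases. If $a \neq 0$, then $a \in A$ (as $M_A \setminus \{0\} \subset A$) and $b \in B \cup \{0\}$; since $A$ is stronger than $B$ we have $A + B \subset A$, and trivially $A + \{0\} \subset A$, so in either sub-case $h = a + b \in A$. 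If $a = 0$, then $h = b \in M_B$ and $h \neq 0$, so $h \in B$; but then, were $h \in A$ we would contradict nothing — so we must instead argue that this case cannot make $h$ leave $A$... which it can.

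Here is the fix, and it is the subtle point: we cannot take $M = M_A + M_B$ literally, because elements of $M_B \setminus \{0\}$ lie in $B$, hence (by $A \cap B = \varnothing$) not in $A$. So instead I would build $M$ as a direct sum of a carefully chosen copy of a full-dimensional space inside $A$ with the space $M_B$, arranged so that no nonzero element has vanishing $A$-component. Concretely: pick a Hamel basis $\{e_\alpha : \alpha < \dim X\}$ of $M_A$ and a basis $\{f_\beta : \beta \in J\}$ of $M_B$ (with $J$ countable if $X$ is separable, as in our applications, but not needed in general), choose an injection $\beta \mapsto \alpha(\beta)$ from $J$ into the index set, and set $g_\beta = e_{\alpha(\beta)} + f_\beta$. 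Let $M$ be the span of $\{e_\alpha : \alpha < \dim X\} \cup \{g_\beta : \beta \in J\}$; equivalently $M = M_A \oplus \langle g_\beta : \beta \rangle$, and one checks $M = M_A + M_B$ as a set but with the key property that writing $h \in M$ in the basis, the coefficient pattern forces: if the "$f$-part" of $h$ is nonzero then so is the "$e$-part". Thus every $h \in M \setminus \{0\}$ has the form $h = a + b$ with $a \in M_A \setminus \{0\}$ and $b \in M_B$, whence $h \in A + (B \cup \{0\}) \subset A$ by the "stronger than" hypothesis. Density is inherited from $M \supset M_B$ (note $f_\beta = g_\beta - e_{\alpha(\beta)} \in M$, so indeed $M_B \subset M$), and $\dim M = \dim X$ as before. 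This completes the proof; the only delicate matter is the bookkeeping in the previous paragraph, ensuring the $A$-component never vanishes on $M \setminus \{0\}$, which is handled by the index injection $\beta \mapsto \alpha(\beta)$.
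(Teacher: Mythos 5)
Your final construction is self-contradictory, and the contradiction sits exactly at the point you yourself flagged as ``the subtle point.'' You claim that writing $h\in M$ in terms of the generators $\{e_\alpha\}\cup\{g_\beta\}$ forces the $e$-part to be nonzero whenever the $f$-part is, but in the very next sentence you observe that $f_\beta=g_\beta-e_{\alpha(\beta)}\in M$, i.e.\ $M_B\subset M$. Any nonzero $f_\beta$ is then an element of $M\setminus\{0\}$ with zero $e$-part; it lies in $B$, hence \emph{not} in $A$ because $A\cap B=\varnothing$. So $M\setminus\{0\}\not\subset A$, and your ``key property'' is false: the span of $\{e_\alpha\}\cup\{g_\beta\}$ is just $M_A+M_B$ again, the set you correctly rejected one paragraph earlier. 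The defect is structural: you cannot simultaneously have $M\supset M_B$ (your route to density) and $M\setminus\{0\}\subset A$ (which forbids $M$ from meeting $B$).

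The repair requires the one hypothesis you never invoke, metrizability. The standard argument behind this lemma (see the cited references of the paper) takes a \emph{countable dense subset} $\{b_n\}_{n\ge1}$ of $M_B$ --- not a Hamel basis --- together with countably many distinct basis vectors $a_n$ of $M_A$, and chooses scalars $\eps_n>0$ so small that $d(\eps_n a_n,0)<1/n$. One then sets $M:=\mathrm{span}\bigl(\{b_n+\eps_n a_n:\,n\in\N\}\cup\{a_i:\,i\in I_0\}\bigr)$, where $\{a_i\}_{i\in I_0}$ are the remaining basis vectors of $M_A$. Now a nonzero combination $\sum_n\lambda_n(b_n+\eps_n a_n)+\sum_i\mu_i a_i$ has $M_A$-component $\sum_n\lambda_n\eps_n a_n+\sum_i\mu_i a_i$, which vanishes only if all $\lambda_n$ and $\mu_i$ vanish, i.e.\ only for the zero element; hence every nonzero element is of the form $a+b$ with $a\in A$ and $b\in B\cup\{0\}$, and lands in $A$ by the ``stronger than'' hypothesis. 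Density holds because $\{b_n+\eps_n a_n\}$ is still dense (each point is within $1/n$ of a member of the dense set $\{b_n\}$), \emph{not} because $M$ contains $M_B$ --- indeed it must not. Note also that merely perturbing a basis of $M_B$, as you do, would not give density even after excluding $M_B$: approximating basis vectors does not control linear combinations with large coefficients. Your dimension count and your case analysis for $a\neq0$ are fine; the gap is entirely in the construction of $M$ and in the unused role of the metric.
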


\begin{theorem}\label{T1}
The family $\mathcal{A}$ is maximal lineable.
\end{theorem}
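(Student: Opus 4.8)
The plan is to exhibit an explicit infinite-dimensional vector space inside $\mathcal{A}\cup\{0\}$ of dimension $\mathfrak c$. The natural idea is to start from the single function $f$ built in Example \ref{ejemplo1}, which is continuous, integrable and unbounded, and to ``spread it out'' into a continuum of linearly independent functions that all share these three properties, with no nontrivial linear combination losing them. A convenient way to do this is to rescale the variable: for each parameter $t>0$ set $f_t(x)=f(tx)$ (or, if one prefers to keep the triangles from overlapping too badly, $f_t(x)=t\,f(tx)$ up to a normalizing constant). Each $f_t$ is continuous, still integrable on $[0,+\infty)$ (since $\int_0^\infty f(tx)\,dx=\frac1t\|f\|_{L^1}<+\infty$), and still unbounded because it attains the value $n$ near $x=n/t$. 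So each $f_t\in\mathcal{A}$, and the set $\{f_t:t>0\}$ has cardinality $\mathfrak c$.

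Then I would let $M=\operatorname{span}\{f_t:t\in E\}$ for a suitable set $E\subset(0,+\infty)$ of size $\mathfrak c$, and verify two things: (i) the family $\{f_t:t\in E\}$ is linearly independent, so $\dim M=\mathfrak c$; and (ii) every nonzero $g\in M$ belongs to $\mathcal{A}$. For (ii), write $g=\sum_{j=1}^k c_j f_{t_j}$ with the $t_j$ distinct and the $c_j$ not all zero; $g$ is automatically continuous and integrable (finite linear combination of such functions), so the only point is that $g$ is unbounded. This is where the construction must be arranged carefully: one needs to choose $E$ and the scalings so that the ``peaks'' of the different $f_{t_j}$ occur at disjoint locations going to infinity, and so that at the peak of $f_{t_j}$ the other functions vanish (or are small). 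With the triangular bumps of Example \ref{ejemplo1}, whose supports around the $n$-th peak have length $2^{-n}$ and shrink fast, it is plausible to pick, say, $E$ to be a set of transcendental numbers, or simply to rescale so that for each $j$ there is a sequence of points $x\to+\infty$ at which $f_{t_j}(x)$ is large (value $\to\infty$) while every $f_{t_i}$ with $i\neq j$ is zero; then $|g(x)|=|c_j|\,f_{t_j}(x)\to\infty$ along that sequence, forcing $\limsup|g|=+\infty$. An even cleaner route, avoiding number-theoretic choices, is to replace dilation by translation combined with a partition of $\N$: fix a partition of $\N$ into $\mathfrak c$ infinite (almost disjoint) subsets $\{N_\gamma\}_{\gamma\in\Gamma}$ and set $g_\gamma=\sum_{n\in N_\gamma}T_n$; each $g_\gamma\in\mathcal{A}$, and because the supports of distinct $T_n$ are pairwise disjoint, any finite linear combination $\sum c_j g_{\gamma_j}$ with some $c_j\neq0$ still attains arbitrarily large values (on the peaks indexed by $N_{\gamma_j}$, where only $g_{\gamma_j}$ is nonzero), hence lies in $\mathcal{A}$; linear independence is immediate from the disjoint supports.

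Concretely I would carry out the proof in this order: (1) recall the triangular functions $T_n$ from Example \ref{ejemplo1} and their pairwise disjoint supports; (2) fix an almost disjoint family $\{N_\gamma\}_{\gamma\in\Gamma}$ of infinite subsets of $\N$ with $|\Gamma|=\mathfrak c$, which exists by a standard argument; (3) define $g_\gamma=\sum_{n\in N_\gamma}T_n$ and check $g_\gamma\in\mathcal C([0,+\infty))\cap L^1([0,+\infty))$, with $\limsup_{x\to+\infty}g_\gamma(x)=+\infty$; (4) set $M=\operatorname{span}\{g_\gamma:\gamma\in\Gamma\}$, observe $\dim M=\mathfrak c$ from disjointness of supports; (5) take an arbitrary nonzero $g=\sum_{j=1}^k c_jg_{\gamma_j}\in M$, pick $j_0$ with $c_{j_0}\neq0$, and use that $N_{\gamma_{j_0}}$ is infinite while it meets each other $N_{\gamma_i}$ in a finite set, so for all large $n\in N_{\gamma_{j_0}}$ the peak at $x=n$ has $g(n)=c_{j_0}\,n$, giving $|g(n)|\to+\infty$; (6) conclude $M\setminus\{0\}\subset\mathcal{A}$, so $\mathcal{A}$ is $\mathfrak c$-lineable, and since $\dim(\mathcal C([0,+\infty))\cap L^1([0,+\infty)))=\mathfrak c$ (and, as already noted in the text, $\mathfrak c$ is the maximal possible dimension of a subspace of $\mathcal{A}\cup\{0\}$), this is maximal lineability.

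The main obstacle is step (5): ensuring that passing to a linear combination does not kill the unboundedness. Over a field like $\R$ one cannot have cancellation of the peaks of a single $g_{\gamma_{j_0}}$ by the others at the same point unless another $g_{\gamma_i}$ is also nonzero there, and the almost-disjointness of the index sets guarantees this happens at only finitely many peaks; the remaining (cofinitely many) peaks of $g_{\gamma_{j_0}}$ are untouched, so the value there is exactly $c_{j_0}n$, which tends to infinity. Everything else — continuity, integrability ($\|g_\gamma\|_{L^1}\le\sum_{n\ge1}n/2^{n+1}<+\infty$), and the cardinality bookkeeping — is routine. If one wants maximal \emph{dense}-lineability as well, one would invoke Lemma \ref{denso} with $A=\mathcal{A}$, which is stronger than the dense-lineable set $B$ of, say, continuous integrable functions vanishing at infinity, but for the present statement plain maximal lineability via the disjoint-bumps construction suffices.
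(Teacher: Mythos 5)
Your argument is correct, but it takes a genuinely different route from the paper's. The paper keeps a single function $f=\sum_n T_n$ and produces a continuum of generators by small translations, $f_t(x)=f(x-t)$ with $t\in[0,\tfrac18)$; the key point there is metric: since the triangle widths $2^{-n}$ shrink below the minimal gap between the chosen shifts $t_1<\dots<t_s$, the peaks of the largest-shift summand are eventually disjoint from the supports of all the others, so a nonzero combination still blows up. You instead select \emph{which} triangles each generator contains, via an almost disjoint family $\{N_\gamma\}_{\gamma\in\Gamma}$ of infinite subsets of $\N$ with $|\Gamma|=\mathfrak c$, and set $g_\gamma=\sum_{n\in N_\gamma}T_n$; unboundedness and linear independence of nonzero combinations then follow purely combinatorially, because all but finitely many peaks indexed by $N_{\gamma_{j_0}}$ are seen by no other generator in the combination. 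Your approach trades the paper's elementary gap estimate for the (standard, but set-theoretic) existence of an almost disjoint family of size $\mathfrak c$, and it arguably makes step (5) more transparent; the paper's translation construction has the side benefit of carrying over verbatim to the sequence setting of Theorem \ref{S0}, where the same shifts are applied to the tails $f_n=\sum_{m\ge n}T_m$. Two minor points: your opening phrase ``partition of $\N$ into $\mathfrak c$ \ldots subsets'' should be read as ``almost disjoint family'' (a countable set admits no partition into $\mathfrak c$ nonempty pieces), which is what you in fact use; and your first idea via dilations $f_t(x)=f(tx)$ would indeed require delicate control of where the rescaled peaks land, so abandoning it for the disjoint-bumps construction was the right call.
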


\begin{proof}

We are looking for a vector subspace $M$ of dimension $\mathfrak{c}$ such that $M \subset \mathcal{A} \cup \{0\}$. Consider the triangular function $f$ given in the Example \ref{ejemplo1} and for any $t \in \left[0,\frac{1}{8}\right)$ we define the functions:
$$f_t(x) := f(x-t).$$

We have $f_t \in \mathcal{A}$ for each $t \in \left[0,\frac{1}{8}\right)$. Define $M$ as the set
$$M := \text{span} \left\{ f_t \ : \ t \in \left[0,\tfrac{1}{8}\right)\right\}.$$
We claim that $M$ is our searched vector subspace. Let us assume that there are $0 < t_1 < t_2 < \ldots < t_s < \frac{1}{8}$ and scalars $c_1, c_2, \ldots, c_s \in \R$ not all simultaneously zero such that
$$ c_1 f_{t_1}(x) + c_2 f_{t_2}(x) + \ldots + c_sf_{t_s}(x) = 0 \quad (x \in [0,+\infty)).$$
Without loss of generality we can assume $c_1 \neq 0$. But then, for any $x_0 \in \left(\frac{3}{4} +t_1, \frac{3}{4} +t_2 \right)$, we get
$$c_1 f_{t_1}(x_0) + c_2 f_{t_2}(x_0) + \ldots + c_sf_{t_s}(x_0)= c_1 f_{t_1}(x_0) \neq 0.$$
So, the set $\{f_t:$ $t\in[0,\frac{1}{8})\}$ is linearly independent and $\text{dim}(M) = \mathfrak{c}$.

It is obvious that every finite combination of elements of $M$ is continuous and integrable, hence it only remains to prove the unboundedness of each mentioned linear combination $c_1f_{t_1}+\ldots +c_sf_{t_s}$. Note that we can assume $c_s\ne0$. Since $t_s > t_i$ for all $i =1,2,\ldots,s-1$, there is $N \in \N$ such that the support of the triangles of $f_{t_s}$ is disjoint with the support of the rest of triangles of $f_{t_i}$. Indeed, there is $N\in\N$ such that $$\min\{t_s-t_i:\,i=1,\ldots,s-1\}>\frac{1}{2^N}.$$ So $N+\frac{1}{2^{N+1}} + t_i < N - \frac{1}{2^{N+1}} + t_s$, for any $i=1,\ldots,s-1$.

Thus, by taking for any $n>N$ the point $x_n: = n +t_s$, we have
$$|c_1f_{t_1}(x_n) + c_2 f_{t_2}(x_n) + \ldots + c_sf_{t_s}(x_n)| = |c_s| n \overset{n \to \infty}{\longrightarrow} +\infty.$$
Hence the family $\mathcal{A}$ is maximal-lineable.
\end{proof}

Recall that $X :=\mathcal{C}([0,+\infty)) \cap L^1([0,+\infty))$ is a topological vector space when we endowed it with the natural translation-invariant distance given by
\begin{equation}
\label{metrica}
d_X(f,g) := \|f-g\|_{L^1} + \sum_{n=1}^{\infty} \dfrac{1}{2^n}\cdot \dfrac{\|f-g\|_{\infty,n}}{1+\|f-g\|_{\infty,n}},\end{equation}
where $\|f\|_{\infty,n} = \max\{|f(x)|:\, x\in[0,n]\}$. Note that $d_X$-convergence means $L^1$-convergence plus uniform convergence on compacta.

\begin{theorem}\label{T2}
The family $\mathcal{A}$ is maximal dense-lineable.
\end{theorem}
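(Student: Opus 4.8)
The plan is to apply Lemma \ref{denso} with $X = \mathcal{C}([0,+\infty)) \cap L^1([0,+\infty))$ equipped with the metric $d_X$ from (\ref{metrica}), taking $A = \mathcal{A}$ and choosing a suitable set $B$. By Theorem \ref{T1} we already know $A$ is maximal lineable, so three things remain: (i) produce a dense-lineable set $B \subset X$ disjoint from $\mathcal{A}$; (ii) check $A \cap B = \varnothing$; (iii) verify the ``stronger than'' property $\mathcal{A} + B \subset \mathcal{A}$. The natural candidate is $B = (\mathcal{C}([0,+\infty)) \cap L^1([0,+\infty))) \setminus \mathcal{A}$, i.e. the bounded continuous integrable functions on $[0,+\infty)$ — more precisely, since we want a dense \emph{vector space} inside $B$, we would take $B$ to be (the nonzero part of) the space of compactly supported continuous functions, or continuous integrable functions vanishing at infinity.

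First I would address density: the set $C_c([0,+\infty))$ of continuous, compactly supported functions is a vector space, it is contained in $X \setminus \mathcal{A}$ (such functions are bounded, so $\limsup|f| < +\infty$), and it is dense in $X$ for the topology $d_X$ — this is because $d_X$-convergence is $L^1$-convergence together with local uniform convergence, and for any $f \in X$ the truncations $f \cdot \chi_{[0,n]}$ smoothed near the endpoint converge to $f$ in both senses (approximate by $f_n(x) = f(x)\varphi_n(x)$ with $\varphi_n$ a continuous cutoff equal to $1$ on $[0,n]$, supported in $[0,n+1]$; then $\|f - f_n\|_{L^1} \to 0$ by dominated convergence and $\|f-f_n\|_{\infty,n} = 0$). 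To get maximal dense-lineability we actually need the dense subspace to have dimension $\mathfrak{c}$; since $C_c([0,+\infty))$ is already $\mathfrak{c}$-dimensional, $B := C_c([0,+\infty)) \setminus \{0\}$ works, and it is trivially dense-lineable with witnessing space $C_c([0,+\infty))$ itself.

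Next, the disjointness $\mathcal{A} \cap B = \varnothing$ is immediate: any $f \in B$ is bounded, hence $\limsup_{x\to+\infty}|f(x)| < +\infty$, so $f \notin \mathcal{A}$. Finally, the ``stronger'' property: if $g \in \mathcal{A}$ and $h \in C_c([0,+\infty))$, then $g + h$ is continuous and integrable (sum of such), and since $h$ vanishes for large $x$, $\limsup_{x\to+\infty}|g(x)+h(x)| = \limsup_{x\to+\infty}|g(x)| = +\infty$, so $g + h \in \mathcal{A}$; thus $\mathcal{A} + B \subset \mathcal{A}$. With all hypotheses of Lemma \ref{denso} verified, $\mathcal{A}$ is maximal dense-lineable.

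I do not anticipate a serious obstacle here; the only point requiring minor care is confirming that $C_c([0,+\infty))$ is genuinely dense in $X$ under the combined $L^1$-plus-locally-uniform topology (one must smooth the cutoff so the approximants stay continuous, and invoke dominated convergence for the $L^1$ part), and making sure the witnessing dense subspace has full dimension $\mathfrak{c}$ — both of which are routine. If one prefers to use the maximal-lineable space $M$ from Theorem \ref{T1} explicitly, one could alternatively build a single dense-lineable space by adding $C_c$ to $M$, but the cleaner route is simply to quote Lemma \ref{denso} as above.
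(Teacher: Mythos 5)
Your proof is correct and follows essentially the same route as the paper: both apply Lemma~\ref{denso} with $A=\mathcal{A}$ and a dense vector space $B$ of compactly supported (hence bounded) continuous functions, so that $\mathcal{A}\cap B=\varnothing$ and $\mathcal{A}+B\subset\mathcal{A}$ are immediate. The only difference is cosmetic: the paper takes $B$ to be the truncated polygonals (a subspace of your $C_c([0,+\infty))$) and verifies density by an explicit $\varepsilon$-estimate, whereas your cutoff-plus-dominated-convergence argument for the density of $C_c([0,+\infty))$ in $(X,d_X)$ is an equally valid and slightly cleaner substitute.
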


\begin{proof}[Proof]
By Theorem \ref{T1} we already have the maximal lineability of the family $\mathcal{A}$. It only remains to find an appropriate subset $B \subset \mathcal{C}([0,+\infty)) \cap L^1([0,+\infty))$ such that $B$ is dense-lineable and $\mathcal{A}$ is stronger than $B$.

Consider the family $B$ of all functions of the form
$$b_{p,n,\gamma}(x) =\begin{cases}
p(x) & \text{if } 0 \leq x \leq n, \\
\frac{p(n)}{\gamma}(n+\gamma-x) & \text{if } n < x \leq n+\gamma, \\
0 & \text{if } x > n+\gamma,
\end{cases}
$$
where $p(x)$ is a polygonal, $n \in \N$ and $\gamma > 0$. Recall that a polygonal is a continuous function consisting of finitely many affine linear mappings on compact subintervals of $[0,+\infty)$.

It is obvious that $B \subset \mathcal{C}([0,+\infty)) \cap L^1([0,+\infty))$ and that is a vector space. We are going to see that $B$ is dense in $X$. Let $f \in \mathcal{C}([0,+\infty)) \cap L^1([0,+\infty))$ and $\eps > 0$. Then there is $N \in \N$ such that
$$\sum_{k=N+1}^{\infty} \dfrac{1}{2^k} < \dfrac{\eps}{6}$$
and
$$\int_N^{\infty} |f(x)|dx < \dfrac{\eps}{6}.$$

By using uniform continuity, it is easy to see that the set of all polygonals is dense in $\mathcal{C}([0,N])$ even with the property that the approximating polygonal matches with the approximation function at the end of the interval. Consequently, we can take a polygonal $p(x)$ in $[0,N]$ such that $p(N) = f(N)$ and $\|f-p\|_{\infty,N} < \frac{\eps}{6N}$. Now define $\gamma:=\frac{\eps}{6(1+|f(N)|)}$. Then
\begin{eqnarray*}
d_X(f,b_{p,N,\gamma}) & \leq &
\|f-p\|_{\infty,N} \sum_{k=1}^{N} \frac{1}{2^k} + \sum_{k=N+1}^{\infty} \frac{1}{2^k} + \int_0^N |f(x) - p(x)|dx \\
&&+ \int_{N}^{N+\gamma} \left| f(x) - \tfrac{f(N)}{\gamma}(N-x+\gamma)\right|dx + \int_{N+\gamma}^{\infty} |f(x)|dx \\
& < & \dfrac{\eps}{6}\cdot 1 + \dfrac{\eps}{6} + \dfrac{\eps}{6N}\cdot N + \int_N^{\infty} |f(x)|dx + \tfrac{|f(N)|}{\gamma}\cdot \gamma\cdot\gamma+\dfrac{\eps}{6} < \eps.
\end{eqnarray*}

Hence, the set $B$ a dense vector space and, in particular, is dense-lineable. By its construction it is clear that any function in $B$ is bounded, so $\mathcal{A} \cap B = \varnothing$ and $\mathcal{A} + B \subset \mathcal{A}$.
Finally, Lemma \ref{denso} gives us the maximal dense-lineability of $\mathcal{A}$.
\end{proof}

In the following, we are going to establish the algebrability of $\mathcal{A}$. But, before this, we need to introduce some notation.
Given a (monic) monomial $m(x_1,\ldots, x_s) = \prod_{i=1}^s x_i^{\alpha_i}$, where $s\in\N$ and $\alpha_i\in\N\cup\{0\}$, and the increasing sequence of prime numbers $p = (p_i)_i$, we define the $p$-index of $m$ by
$$\text{ind}_p(m) = m(p_1,\ldots, p_s) = \prod_{i=1}^s p_i^{\alpha_i}.$$

\begin{remark}\label{obs1}
Observe that the uniqueness of the factorization theorem states that, given a natural number $n \in \N$, there is only one way of expressing it as product of powers of prime numbers. That is, there is an unique factorization
$$n = \prod_{i=1}^s p_i^{\alpha_i} = p\text{-index of a monomial }m,$$
which allows us to state a bijection between $\N$ and the set of all monic monomials. That means that, given a monomial $m$, it is uniquely described by its $p$-index $\text{ind}_p(m)$.
\end{remark}

\begin{theorem}\label{T3}
The family $\mathcal{A}$ is strongly-algebrable.
\end{theorem}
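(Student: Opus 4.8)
The plan is to build an explicit free algebra of functions by the ``exponential--prime'' device for which the $p$-index notation above was introduced. Start from a function $f$ as in Example~\ref{ejemplo1}, but take the $n$-th triangle to have base an interval of length $\ell_n$ centred at $n$, where the lengths $\ell_n\in(0,1)$ decay faster than every geometric sequence, i.e. $\sum_{n\ge1}\ell_n k^n<+\infty$ for every $k\in\N$ (for instance $\ell_n=2^{-n^2}$). As in Example~\ref{ejemplo1}, such an $f$ is continuous, nonnegative, unbounded (since $f(n)=n$) and integrable, and in addition it vanishes off the union $U$ of the triangle supports. For the $i$-th prime $p_i$ set
$$g_i(x):=p_i^{\,f(x)}-1\qquad(i\in\N),$$
and let $M$ be the algebra generated by $\{g_i:i\in\N\}$. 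Each $g_i$ is continuous and nonnegative, vanishes off $U$, is unbounded ($g_i(n)=p_i^{\,n}-1\to+\infty$), and is integrable because $\int_0^\infty g_i\le\sum_n\ell_n(p_i^{\,n}-1)<+\infty$; thus $g_i\in\mathcal A$.

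The core computation is as follows. Fix $s\in\N$, distinct indices $i_1<\dots<i_s$, and a nonzero polynomial $P(x_1,\dots,x_s)$ with zero constant term (the only polynomials whose value on the $g_{i_j}$ can possibly lie in $\mathcal A$, and exactly the nonzero elements of the free non-unital algebra the $g_i$ generate). Expanding each factor $(p_{i_j}^{\,f}-1)^{\alpha_j}$ by the binomial theorem and collecting,
$$P(g_{i_1},\dots,g_{i_s})=\sum_{\gamma}d_\gamma\, N_\gamma^{\,f},\qquad N_\gamma:=\prod_{j=1}^s p_{i_j}^{\,\gamma_j},$$
the sum running over a finite set of multi-indices $\gamma$. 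Since $p_{i_1},\dots,p_{i_s}$ are distinct primes, uniqueness of factorization (Remark~\ref{obs1}) makes $\gamma\mapsto N_\gamma$ injective. Hence there is a unique monomial $x^{\alpha^*}$ actually present in $P$ whose index $N_{\alpha^*}$ is maximal; put $N^*:=N_{\alpha^*}\ge2$ (recall $\alpha^*\neq0$). Every $\gamma$ occurring in the expansion satisfies $\gamma\le\alpha$ componentwise for some monomial $x^\alpha$ of $P$, so $N_\gamma\le N_\alpha\le N^*$ with equality only when $\gamma=\alpha=\alpha^*$; consequently the coefficient $d_{\alpha^*}$ of $N^{*\,f}$ equals the coefficient of $x^{\alpha^*}$ in $P$, which is nonzero.

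The three required properties now follow. Continuity is clear. For unboundedness (and, en route, nonvanishing, hence freeness), evaluate at $x=n$: since $f(n)=n$,
$$P(g_{i_1},\dots,g_{i_s})(n)=\sum_\gamma d_\gamma N_\gamma^{\,n}\sim d_{\alpha^*}(N^*)^n\longrightarrow\pm\infty\qquad(n\to\infty),$$
because $N^*\ge2$ strictly dominates every other $N_\gamma$; in particular $P(g_{i_1},\dots,g_{i_s})\not\equiv0$. For integrability, note $P$ has no constant term, so the function vanishes off $U$; on the $n$-th triangle $0\le g_{i_j}\le p_{i_s}^{\,n}$, whence $|P(g_{i_1},\dots,g_{i_s})|\le C_P\,p_{i_s}^{\,n\deg P}$ there, with $C_P$ the sum of the absolute values of the coefficients of $P$, and therefore
$$\int_0^\infty|P(g_{i_1},\dots,g_{i_s})|\le C_P\sum_{n\ge1}\ell_n\big(p_{i_s}^{\,\deg P}\big)^n<+\infty$$
by the choice of the $\ell_n$. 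Hence $P(g_{i_1},\dots,g_{i_s})\in\mathcal A$, so $M\setminus\{0\}\subset\mathcal A$ and $\{g_i\}_{i\in\N}$ is an infinite free generating set of $M$: $\mathcal A$ is strongly algebrable.

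I expect the main obstacle to be the integrability of the whole algebra simultaneously, rather than that of the individual generators: a single fixed choice of generators must yield integrable functions for polynomials of arbitrarily large degree, whose monomials already grow like $p_{i_s}^{\,n\deg P}$ on the $n$-th triangle. This is exactly what dictates the two design choices above --- subtracting $1$ so that each $g_i$, hence every element of $M$, vanishes off $U$, and thinning the bases so that $\sum_n\ell_n k^n<+\infty$ for all $k$ absorbs that exponential growth --- while the sparseness needed for integrability is harmless for unboundedness, which only uses the single sequence $x_n=n$. (Replacing the primes by $\{e^{r}:r\in H\}$ for a Hamel basis $H\subset(0,+\infty)$ of $\R$ over $\Q$ yields, by the same argument, strong $\mathfrak c$-algebrability.)
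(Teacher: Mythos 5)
Your proof is correct, but it takes a genuinely different route from the paper's. The paper keeps the triangles' bases fixed (length $2^{-n}$) and instead ``curves'' the triangles: it sets $T_{n,p}$ equal to the $p$-th power of the linear pieces of $T_n$, puts $g_p=\sum_n T_{n,p}$ and $F_j=g_{\log p_j}$, and exploits the clean multiplicative identity $m(F_1,\ldots,F_s)=g_{\log(\mathrm{ind}_p(m))}$: every monomial in the generators is again a function of the same type, with height $n^{\log(\mathrm{ind}_p(m))}$ at $x=n$. Since this growth is only polynomial in $n$, integrability of every element of the algebra comes for free from $\sum_n n^q/2^n<\infty$, and a nontrivial polynomial evaluates at $n$ to $\sum_i\lambda_i n^{\log(\mathrm{ind}_p(m_i))}$ with positive, pairwise distinct exponents (by unique factorization), giving unboundedness and freeness in one stroke. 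You instead use the exponential-composition device $g_i=p_i^{\,f}-1$, which forces exponential heights $p_i^{\,n}$ and hence the super-geometric thinning $\sum_n\ell_n k^n<\infty$ of the bases --- a design choice you correctly identify as the crux for integrability of the whole algebra at once --- and you replace the paper's closed form by a binomial expansion plus a unique-maximal-index, leading-coefficient argument (which is sound: the maximal $N^*$ is attained only at $\gamma=\alpha=\alpha^*$, so $d_{\alpha^*}=\lambda_{\alpha^*}\neq0$, and $N^*\ge2$ dominates at $x=n$). Both arguments ultimately rest on unique factorization to separate the monomials. What your approach buys, as you note, is that replacing the primes by exponentials of a Hamel basis yields strong $\mathfrak{c}$-algebrability, a strictly stronger conclusion than the countably generated free algebra the paper exhibits; what the paper's approach buys is that no re-thinning of the supports is needed and every monomial is literally another generator-type function, so the verification is shorter.
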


\begin{proof}[Proof]
For any $n \in \N$, $p > 0$, we consider the ``triangles'' on $[0,+\infty)$ given by
$$T_{n,p}(x) =
\begin{cases}
n^p(2^{n+1}x + (1-n2^{n+1}))^p  & \text{if } x \in \left[n-\tfrac{1}{2^{n+1}},n\right), \\
n^p(-2^{n+1}x + (1+n2^{n+1}))^p & \text{if } x \in \left[n,n+\tfrac{1}{2^{n+1}}\right], \\
0                           & \text{otherwise},
\end{cases}
$$
and we define the functions $g_p : [0,+\infty) \longrightarrow \R$ as:
$$g_p(x) = \sum_{n=1}^{\infty} T_{n,p}(x).$$

Because of the disjointness of the supports of the triangles $T_{n,p}(x)$ ($n \in \N$), the functions  $g_p$ are well defined and continuous on $[0,+\infty)$. Furthermore, we can easily bound their $L^1$-norm. Observe that the triangles $T_{n,p}(x)$ are always ``inside'' the real triangles of basis $\left[n-\frac{1}{2^{n+1}},n+\frac{1}{2^{n+1}}\right]$ and height $n^p$ (on $x=n$). Hence
$$\|g_p\|_{L^1} =  \sum_{n=1}^{\infty}\int_0^{\infty}  T_{n,p}(x)dx \leq \sum_{n=1}^{\infty} \dfrac{n^p}{2^n} < +\infty.$$
So,  $g_p \in \mathcal{C}([0,+\infty)) \cap L^1([0,+\infty))$. Now, by evaluating $g_p$ on the sequence $(x_n)_n = (n)_n$ we get that
$$g_p(x_n) = g_p(n) = n^p \overset{n\to\infty}{\longrightarrow} +\infty,$$
which yields $g_p \in \mathcal{A}$ for all $p > 0$.

Let $(p_j)_j$ be the increasing sequence of prime numbers, and let us define for each $j \in \N$ the function $F_j$ by
$$F_j(x) := \sum_{n=1}^{\infty} T_{n,\log p_j}(x) = g_{\log p_j}(x).$$
Note that $(F_j)_j\subset\mathcal{A}$. Let $\mathcal{B}$ be the algebra generated by $(F_j)_{j}$, that is,
$$\begin{array}{rl}
\mathcal{B} = \big\{ P(F_{1}, \ldots, F_{s}):& P\text{ is a polynomial in } s \text{ variables without}\\
&\text{constant term},\,  s \in \N \big\}.\end{array}$$

We are going to prove that $\mathcal{B}$ is the desired infinitely generated algebra in $\mathcal{A}$.
Let $m(x_1, \ldots, x_s) = \prod_{i=1}^s x_i^{\alpha_i}$ ($\alpha_i \in \N\cup\{0\}$) be a non constant monomial. Hence
$$m(F_{1}, \ldots, F_{s})(x) = \prod_{i=1}^s F_{i}(x)^{\alpha_i} = \prod_{i=1}^s \left( \sum_{n=1}^{\infty} T_{n,\log(p_{i})}(x) \right)^{\alpha_i}.$$

For each $n \in \N$ and each $x \in \left[n-\frac{1}{2^{n+1}},n\right)$ we have
\begin{eqnarray*}
m(F_{1}, \ldots, F_{s})(x) &=& \prod_{i=1}^s T_{n,\log(p_{i})}(x)^{\alpha_i} \\
                                        &=&\prod_{i=1}^s\left(\left[n\left(2^{n+1}x+(1-n2^{n+1})\right)\right]^{\log(p_{i})}\right)^{\alpha_i} \\
                                        &=& \left[ n\left(2^{n+1}x+(1-n2^{n+1})\right)\right]^{{ \sum_{i=1}^s \log(p_{i}^{\alpha_i})}} \\
                                        &=& {{\left[ n\left(2^{n+1}x+(1-n2^{n+1})\right)\right]}}^{{ \log\left( \text{ind}_p(m)\right)}}
\end{eqnarray*}
Analogously, for each $x \in \left[n,n+\frac{1}{2^{n+1}}\right]$, we get
$$m(F_{1},\ldots, F_{s})(x) = \left[ n\left(-2^{n+1}x+(1+n2^{n+1})\right)\right]^{{ \log\left( \text{ind}_p(m)\right)}}.$$
So
\begin{equation}\label{m es g}
m(F_{1},\ldots, F_{s}) =g_{\log({\rm ind}_p(m))}.\end{equation}
In particular, $m(F_{1}, \ldots, F_{s}) \in \mathcal{A}$, and trivially $\mathcal{B} \subset \mathcal{C}([0,+\infty)) \cap L^1([0,+\infty))$. It only remains to prove that $(F_j)_j$ forms an algebraic independent set and that any element of $\mathcal{B}$ is not bounded. In order to prove these properties, let us take a nontrivial algebraic combination
$$F(x) := P(F_{1}, \ldots, F_{s})(x) = \sum_{i=1}^l \lambda_i m_i(F_{1}, \ldots, F_{s})(x).$$
For each $n \in \N$, we have by \eqref{m es g} that
\begin{equation}\label{polinomio}
F(n) = P(F_{1}, \ldots, F_{s})(n) = \sum_{i=1}^l \lambda_i n^{\log(\text{ind}_p(m_i))}.
\end{equation}
By Remark \ref{obs1}, all exponents of the right hand part of \eqref{polinomio} are positive and pairwise different, so $|F(n)| \overset{n\to\infty}{\longrightarrow}+\infty$. Hence, $F \in \mathcal{A}$, and $\mathcal{B}\setminus\{0\} \subset \mathcal{A}$.

Finally, it is also clear from \eqref{polinomio} and Remark \ref{obs1} that $(F_j)_j$ is a free generated system of the algebra $\mathcal{B}$, and thus $\mathcal{A}$ is strongly algebrable.

\end{proof}

\begin{remark}\label{residual}
  In a purely topological sense, it is easy to see that $\mathcal{A}$ is large. To be more specific, $\mathcal{A}$ is residual in our Baire space $X$ (note that $X$ is Baire because, as it is easy to see, the distance $d_X$ given in \eqref{metrica} is complete). Indeed, $$X\setminus\mathcal{A}=\bigcup_{n=1}^\infty \mathcal{A}_n,$$ where $$\mathcal{A}_n:=\left\{f\in X:\, |f(x)|\le n\text{ for all }x\ge 0\right\}.$$
  Since uniform convergence on compacta implies pointwise convergence, it is clear that each $\mathcal{A}_n$ is closed in $X$. Moreover, given $f\in\mathcal{A}_n$ and $\eps>0$, it is also easy but cumbersome to construct a function $g\in\mathcal{A}$ such that $d_X(f,g)<\eps$ but $|g(x_0)|>n$ for some $x_0>0$. This implies that every $\mathcal{A}_n$ has empty interior, so that $X\setminus\mathcal{A}$ is of firste category, which proves that $\mathcal{A}$ is residual.
\end{remark}


\section{Sequences of unbounded, continuous and integrable functions}

We denote by $\mathcal{A}_0$ the space of sequences of unbounded, continuous and integrable functions in $[0,+\infty)$, converging to zero in the metric $d_X$ defined by \eqref{metrica}, that is,
$$\mathcal{A}_0 := \{(f_n)_n: \, f_n \in \mathcal{A} \text{ for all }n\in\N \text{ and } d_X(f_n,0) \overset{n\to\infty}{\longrightarrow}0\}.$$

In order to prove the dense-lineability of $\mathcal{A}_0$, we consider the sequence space
$$c_0(X)    := \{ (f_n)_n :\, f_n \in X \text{ for all } n \in \N \text{ and } d_X(f_n,0)  \overset{n\to\infty}{\longrightarrow}0\},$$
endowed with the following distance:
\begin{equation}\label{metrica sucesiones} d_{c_0(X)}((f_n)_n, (g_n)_n) = \sup_{n \in \N} d_X(f_n, g_n).\end{equation}
Observe that $(c_0(X), d_{c_0(X)})$ is a metric topological vector space and that, given a sequence $((f_{k,n})_n)_k\subset c_0(X)$, then $d_{c_0(X)}(((f_{k,n})_n)_k,0) \rightarrow 0$ ($k\to\infty$) if and only if $\sup_n|f_{k,n}|\to0$ ($k\to\infty$) uniformly on compacta in $[0,+\infty)$.

We define
\begin{eqnarray*}
c_{00}(B) :=
 \big\{ (b_n)_n&:& \text{exists } n_0\in\N \text{ such that } b_n \in B \text{ for all }n \leq n_0 \text{ and }\\
 && b_n = 0 \text{ for all } n > n_0\big\},
 \end{eqnarray*}
where $B$ is the dense subset of $X$ defined in the proof of Theorem \ref{T3}.

\begin{lemma}\label{denalg}
The space $c_{00}(B)$ is dense in $c_0(X)$.
\end{lemma}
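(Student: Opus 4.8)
The plan is to show that every element of $c_0(X)$ can be approximated in the metric $d_{c_0(X)}$ by an element of $c_{00}(B)$. Fix $(f_n)_n \in c_0(X)$ and $\eps > 0$. The key observation is that the tail of a null sequence is uniformly small: since $d_X(f_n,0) \to 0$, there is $n_0 \in \N$ such that $d_X(f_n,0) < \eps$ for all $n > n_0$. For these indices I would simply set $g_n := 0$; then $d_X(f_n,g_n) = d_X(f_n,0) < \eps$, which is exactly the kind of bound we need, and $g_n \in B$ trivially (the zero function is a polygonal with $p(N)=0$ and any $\gamma>0$).

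Next I would handle the finitely many indices $n \le n_0$. For each such $n$, the proof of Theorem \ref{T3} (more precisely, the density argument carried out in the proof of Theorem \ref{T2}, where $B$ is shown to be dense in $X$) provides a function $b_n \in B$ with $d_X(f_n, b_n) < \eps$. Since there are only finitely many such $n$, we may pick all of them at once. Now define $(g_n)_n$ by $g_n := b_n$ for $n \le n_0$ and $g_n := 0$ for $n > n_0$. By construction $(g_n)_n \in c_{00}(B)$, and
$$d_{c_0(X)}((f_n)_n,(g_n)_n) = \sup_{n\in\N} d_X(f_n,g_n) \le \eps.$$
To be slightly careful about strict inequality one takes $\eps/2$ throughout, or notes that the supremum over $n \le n_0$ is a finite max of quantities strictly less than $\eps$; either way the bound can be made strictly less than the prescribed tolerance. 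This shows $c_{00}(B)$ is dense in $c_0(X)$.

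The only genuine point requiring attention is that $B$ is dense in $X$, and this is already established inside the proof of Theorem \ref{T2}, so the present lemma reduces to the standard fact that $c_{00}$ of a dense subset is dense in $c_0$ of the ambient space — the tail estimate afforded by the sup-metric on $c_0(X)$ is what makes it work. I do not anticipate a real obstacle here; the argument is essentially bookkeeping, splitting the sequence into a finite head (handled by density of $B$) and an infinite tail (handled by the null-sequence hypothesis).
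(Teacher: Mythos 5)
Your proposal is correct and follows essentially the same route as the paper: split the sequence at an index $n_0$ beyond which $d_X(f_n,0)<\eps$, approximate the finite head using the density of $B$ in $X$ (established in the proof of Theorem \ref{T2}), and pad the tail with zeros. Your remark about replacing $\eps$ by $\eps/2$ to secure strict inequality is a minor refinement the paper glosses over, but the argument is the same.
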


\begin{proof}
Let $(f_n)_n\in c_0(X)$. Given $\eps>0$, there is $n_0\in\N$ with $d_X(f_n,0)<\eps$ for all $n\ge n_0$. Furthermore, the denseness of $B$ in $X$ guarantees the existence of $b_1,\ldots,b_{n_0-1}\in B$ such that $d_X(f_n,b_n)<\eps$ ($n=1,\ldots,n_0-1$). Finally, define $b_n:=0$ for $n\ge n_0$. It is clear that $(b_n)_n\in c_{00}(B)$ and, by construction, $$d_{c_0(X)}((f_n)_n,(b_n)_n) = \max\left\{\sup_{1\le n\le n_0-1} d_X(f_n,b_n),\sup_{n\ge n_0} d_X(f_n,0)\right\}<\eps,$$ and we are done.
\end{proof}

Now we can establish our first result on sequences spaces.

\begin{theorem}\label{S0}
The family of sequences $\mathcal{A}_0$ is maximal dense-lineable.
\end{theorem}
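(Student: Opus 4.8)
The plan is to mimic the strategy already used for $\mathcal{A}$ in Section 2: first establish maximal lineability of $\mathcal{A}_0$ by producing an explicit $\mathfrak{c}$-dimensional subspace inside $\mathcal{A}_0\cup\{0\}$, then invoke the "stronger than" technique of Lemma \ref{denso} together with the density result of Lemma \ref{denalg} to upgrade it to maximal dense-lineability. For the lineability part, I would take the functions $f_t$ (translates of the Example \ref{ejemplo1} function) used in the proof of Theorem \ref{T1}, but now damp them so they tend to zero: for $t\in[0,\tfrac18)$ set, say, $f_{t,n}(x):=\tfrac1n f_t(x)$, or more robustly a sequence built from shrinking triangles so that both $\|f_{t,n}\|_{L^1}\to 0$ and $\|f_{t,n}\|_{\infty,k}\to 0$ for each fixed $k$, while each $f_{t,n}$ is still unbounded (its triangles far out still reach height tending to infinity — one can keep the peaks at height $m$ at position $m$ for $m\ge n$, and zero out the first $n-1$ triangles, so the tail still gives unboundedness but the function is small on $[0,n-1]$ and has small $L^1$-norm). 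Denote by $\Phi_t:=(f_{t,n})_n$ the resulting sequence; then $\Phi_t\in\mathcal{A}_0$. Set $M:=\mathrm{span}\{\Phi_t:\,t\in[0,\tfrac18)\}$. Linear independence of the $\Phi_t$ follows exactly as in Theorem \ref{T1}, evaluating the $n$-th coordinate at a point $x_0\in(\tfrac34+t_1,\tfrac34+t_2)$ where only the $t_1$-translate is nonzero; and a nontrivial finite combination $\sum c_i\Phi_{t_i}$ is again a sequence of unbounded continuous integrable functions (disjoint supports of far-out triangles give unboundedness of each coordinate with the coefficient $c_s$ surviving, just as before) that still converges to $0$ in $d_{c_0(X)}$ because each summand does. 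Hence $\dim M=\mathfrak{c}$ and $M\setminus\{0\}\subset\mathcal{A}_0$.

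Next, for the dense part, I would apply Lemma \ref{denso} in the space $X_{\text{seq}}:=c_0(X)$ with its metric \eqref{metrica sucesiones}, taking $A:=\mathcal{A}_0$ and $B:=c_{00}(B)$, the finitely-supported sequences of functions in the dense subspace $B$ from the proof of Theorem \ref{T3}. By Lemma \ref{denalg}, $c_{00}(B)$ is dense in $c_0(X)$, hence dense-lineable (it is a vector space). Every element of $c_{00}(B)$ is a sequence which is eventually $0$ and whose nonzero terms are bounded functions, so no such sequence can lie in $\mathcal{A}_0$ (whose terms are all unbounded); thus $\mathcal{A}_0\cap c_{00}(B)=\varnothing$. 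For the "stronger than" property I must check $\mathcal{A}_0+c_{00}(B)\subset\mathcal{A}_0$: if $(f_n)_n\in\mathcal{A}_0$ and $(b_n)_n\in c_{00}(B)$ with $b_n=0$ for $n>n_0$, then for $n>n_0$ the sum is $f_n\in\mathcal{A}$, and for $n\le n_0$ the sum $f_n+b_n$ is continuous, integrable and still unbounded (adding a bounded function to an unbounded one keeps it unbounded), so every term of $(f_n+b_n)_n$ lies in $\mathcal{A}$; moreover $d_X(f_n+b_n,0)\le d_X(f_n,0)+d_X(b_n,0)\to 0$ since only finitely many $b_n$ are nonzero, so the summed sequence is in $c_0(X)$ and in fact in $\mathcal{A}_0$. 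Also $c_0(X)$ is metrizable by \eqref{metrica sucesiones}. Therefore Lemma \ref{denso} applies and yields that $\mathcal{A}_0$ is maximal dense-lineable, once we confirm the dimension count $\dim c_0(X)=\mathfrak{c}$ (which holds: $c_0(X)$ has cardinality $\mathfrak{c}$, so its dimension is at most $\mathfrak{c}$, and our $M$ shows it is exactly $\mathfrak{c}$).

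I expect the only genuinely delicate point to be the construction of the damped sequences $f_{t,n}$: one must arrange \emph{simultaneously} that each $f_{t,n}$ is unbounded, that $\|f_{t,n}\|_{L^1}\to 0$, and that $\sup_{x\in[0,k]}|f_{t,n}(x)|\to 0$ for every fixed $k$ — and all of this uniformly enough that a finite linear combination inherits the same three properties. The clean way is to let $f_{t,n}$ be the translate by $t$ of $\sum_{m\ge n}\widetilde T_m$, where $\widetilde T_m$ is a triangle of height $m$ at $x=m$ but with base so small (e.g.\ length $2^{-m^2}$) that $\sum_{m\ge n}(\text{area})\to 0$; then $f_{t,n}$ vanishes on $[0,n-1]$, has $L^1$-norm $\to 0$, yet $f_{t,n}(m)=m$ for $m\ge n$ so it is unbounded, and the disjoint-support peak argument at $x_n:=n+t_s$ (for $n$ large) still forces $|c_s|n\to\infty$ in the $n$-th coordinate, giving unboundedness of each coordinate of the combination. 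Everything else is a routine transcription of the arguments in Theorems \ref{T1} and \ref{T2}. The remaining parts of the paper's Section 4 presumably then strengthen this (rate of decay, smoothness), but for Theorem \ref{S0} itself the above suffices.
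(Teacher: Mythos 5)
Your proposal is correct and takes essentially the same route as the paper: the paper's spanning sequences are exactly the translates by $t\in[0,\tfrac18)$ of the tails $\sum_{m\ge n}T_m$ of the Example~\ref{ejemplo1} series (whose original bases of length $2^{-m}$ already force $d_X$-convergence to zero, so no extra shrinking is needed), and the dense-lineability upgrade via $c_{00}(B)$, Lemma~\ref{denalg} and Lemma~\ref{denso} is identical to what you describe.
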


\begin{proof}[Proof]
Consider the sequence of functions $(f_n)_n$ given by
$$f_n(x) := \sum_{m=n}^{\infty} T_m(x), \quad (x \geq 0,\, n\in\N)$$
where $T_m$ is the isosceles triangle of height $m$ whose basis is the interval $\left[m-\frac{1}{2^{m+1}},m+\frac{1}{2^{m+1}}\right]$.

Observe that, again, we have a gap of at least $\frac{1}{8}$ between each pair of consecutive triangles of $f_n$. Now define the functions
$$f_{n,t}(x) := f_n(x-t), \quad t \in \left[0,\tfrac{1}{8}\right),$$
and consider the set $M_0$ given by
$$M_0 := \text{span} \left\{ (f_{n,t})_n \ :  t \in \left[0,\tfrac{1}{8}\right)\right\}.$$

Following the same argument as in Theorem \ref{T1}, we have that each $f_{n,t}$ is a continuous, unbounded and integrable function in $[0,+\infty)$, that is, $f_{n,t} \in \mathcal{A}$ for all $t \in \left[0,\frac{1}{8}\right)$ and $n \in \N$, and that the sequences $(f_{n,t})_n$ are linearly independent.

In addition, it is clear that the whole series $\dis\sum_{m=1}^\infty T_m(x)$ converges to zero both in $L_1$-norm an uniformly in compact sets of $[0,+\infty)$, so in $d_X$. Hence $d_X(f_{n,t},0)\to0$ ($n\to\infty$) for all $t\in[0,\frac{1}{8})$,  $M_0\setminus\{0\}\subset{\mathcal A}_0$ and ${\mathcal A}_0$ is maximal lineable.

Now, by Lemma \ref{denalg}, $c_{00}(B)$ is a dense-lineable subset of $c_0(X)$. Moreover, each function $b_n$ of a sequence $(b_n)_n \in c_{00}(X)$ is bounded, so $c_{00}(B) + \mathcal{A}_0 \subset \mathcal{A}_0$. Finally, Lemma \ref{denso} tells us that $\mathcal{A}_0$ is maximal dense-lineable.
\end{proof}

As in the case of single functions, we now can prove the algebrability of ${\mathcal A}_0$.

\begin{theorem}
The family of sequences $\mathcal{A}_0$ is strongly algebrable.
\end{theorem}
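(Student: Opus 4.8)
The plan is to combine the algebra construction from Theorem \ref{T3} with the sequence construction from Theorem \ref{S0}. Recall that in Theorem \ref{T3} we built functions $g_p$ out of powered triangles $T_{n,p}$, and the crucial algebraic fact was identity \eqref{m es g}: a monic monomial evaluated at the generators $F_j=g_{\log p_j}$ equals $g_{\log(\mathrm{ind}_p(m))}$, so that $F(n)=\sum_i \lambda_i n^{\log(\mathrm{ind}_p(m_i))}$ with pairwise distinct positive exponents, forcing unboundedness. The only extra ingredient needed here is that the tails of these series go to zero in $d_X$, exactly as the tails $\sum_{m=n}^\infty T_m$ did in Theorem \ref{S0}.

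Concretely, I would for each prime $p_j$ and each $k\in\N$ define the ``truncated'' function
$$g_{p}^{(k)}(x):=\sum_{n=k}^{\infty} T_{n,p}(x),$$
and set $F_{j,k}:=g_{\log p_j}^{(k)}$. The candidate generators of the algebra are the sequences $G_j:=(F_{j,k})_k$, $j\in\N$. First I would check that each $G_j\in\mathcal{A}_0$: continuity and integrability are as in Theorem \ref{T3}, unboundedness of $F_{j,k}$ follows from $F_{j,k}(n)=n^{\log p_j}\to\infty$ for $n\ge k$, and $d_X(F_{j,k},0)\to0$ as $k\to\infty$ because $\|g_{\log p_j}^{(k)}\|_{L^1}\le\sum_{n=k}^\infty n^{\log p_j}/2^n\to0$ and, on any fixed compact $[0,N]$, $F_{j,k}$ vanishes once $k>N$, giving uniform convergence on compacta. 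Then, exactly as in \eqref{m es g}, a monomial $m$ evaluated at $G_{1},\ldots,G_{s}$ yields coordinatewise $m(F_{1,k},\ldots,F_{s,k})=g_{\log(\mathrm{ind}_p(m))}^{(k)}$, because on each triangle the product of powers collapses to a single power with exponent $\sum_i\log(p_i^{\alpha_i})=\log(\mathrm{ind}_p(m))$, and the supports of distinct triangles are disjoint. So $m(G_1,\ldots,G_s)=(g_{\log(\mathrm{ind}_p(m))}^{(k)})_k$, which again lies in $\mathcal{A}_0$ by the same tail estimate.

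For a general nontrivial polynomial combination $F:=P(G_1,\ldots,G_s)=\sum_{i=1}^l\lambda_i m_i(G_1,\ldots,G_s)$, the $k$-th term of the sequence is $\sum_i\lambda_i g_{\log(\mathrm{ind}_p(m_i))}^{(k)}$, whose value at any integer $n\ge k$ equals $\sum_i\lambda_i n^{\log(\mathrm{ind}_p(m_i))}$; by Remark \ref{obs1} these exponents are positive and pairwise distinct, so this tends to $\pm\infty$ as $n\to\infty$, giving unboundedness of each term and hence $F\in\mathcal{A}_0$ and $\mathcal{A}_0$-membership of the whole sequence (its convergence to zero being the tail estimate applied finitely many times). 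The same distinctness of exponents shows $F$ is not identically zero, so $(G_j)_j$ is a free generating system and the algebra $\mathcal B$ they generate witnesses strong algebrability. I would then invoke Lemma \ref{denso} together with Lemma \ref{denalg} only if one also wants dense strong algebrability; for the bare statement it is not needed.

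The main obstacle is essentially bookkeeping rather than a genuine difficulty: one must be careful that the truncation index $k$ interacts correctly with the algebra operations, i.e. that $m(G_1,\ldots,G_s)$ is genuinely the sequence with $k$-th entry $g_{\log(\mathrm{ind}_p(m))}^{(k)}$ and not some mismatched truncation, and that the finite sum defining $F$ inherits the $d_X\to0$ decay uniformly enough — which it does, since a finite sum of sequences converging to $0$ in $d_X$ again converges to $0$. The one point deserving a line of care is that, unlike in Theorem \ref{S0}, we do not shift by a parameter $t$, so linear (indeed algebraic) independence must come purely from the distinct-exponent argument of \eqref{polinomio} rather than from disjoint supports of shifted copies; this is exactly what Remark \ref{obs1} provides.
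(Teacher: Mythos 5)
Your proposal is correct and follows essentially the same route as the paper: the generators are the sequences of tails $\bigl(\sum_{m=k}^{\infty}T_{m,\log p_j}\bigr)_k$, the monomial identity collapses products to a single tail $g^{(k)}_{\log(\mathrm{ind}_p(m))}$, and unboundedness plus freeness both come from the pairwise distinct positive exponents of Remark \ref{obs1}, with the $d_X$-convergence to zero handled by the tail estimates. No substantive differences to report.
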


\begin{proof}[Proof] Let $F_j$ ($j\in\N$) be the functions constructed in the proof of Theorem \ref{T3}, that is, $F_j(x) = \sum_{n=1}^{\infty} T_{n,\log(p_j)}(x) = g_{\log(p_j)}$, where $(p_j)_j$ is the increasing sequence of prime numbers.

For each $j,n\in\N$, let $F_{j,n}(x) := \sum_{m=n}^{\infty} T_{m,\log(p_j)}(x)$. Let $\mathcal{B}_0$ be the algebra generated by the sequences $\{(F_{j,n})_n:\, j\in\N\}$.

Following the same argument as in Theorem \ref{T3} and taking the monomial $M(x_1, \ldots, x_s) = \prod_{i=1}^s x_i^{\alpha_i}$, we have for each $n \in \N$ that
\begin{equation}\label{monomio2}
M(F_{1,n}, \ldots, F_{s,n})(x) = \sum_{m=n}^\infty T_{m,\log({\rm ind}_p(M))}(x).\end{equation}
Therefore, each component of the sequence $M((F_{1,n})_n,\ldots,(F_{s,n})_n)$ is the tail of a convergent series (in the topology generated by $d_X$) and we have that $d_X(M(F_{1,n},\ldots,F_{s,n}),0)\overset{n\to\infty}{\longrightarrow} 0$. In particular, $M((F_{1,n})_n, \ldots, (F_{s,n})_n)\in {\mathcal A}_0$.

Finally, let $(F_n)_n$ be a nontrivial algebraic combination of the sequences $(F_{j,n})_n$ $(j\in\N)$, that is, for each nonzero polynomial $P$ in $s$ variable ($s\in\N$) and each $n\in\N$, we consider $$F_n(x):=P(F_{1,n},\ldots,F_{s,n})(x)=\sum_{i=1}^l \lambda_i M_i(F_{1,n},\ldots,F_{s,n})(x),$$ where the $\lambda_i$'s are not simultaneously zero.
For each $n\in\N$ and each $m\ge n$, taking \eqref{monomio2} into account, we obtain
\begin{equation*}
  \label{monomiofinal}
  F_n(m)=P(F_{1,n},\ldots,F_{s,n})(m)=\sum_{i=1}^l\lambda_i m^{\log({\rm ind}_p(M_i))}
\end{equation*}
Now, continuing in the same manner as the proof of Theorem \ref{T3}, we get that
$|F_n(m)|\to+\infty$ ($m\to\infty$), hence ${\mathcal B}_0\setminus\{0\}\subset{\mathcal A}_0$ and $\mathcal{B}_0$ is a freely generated algebra. Thus $\mathcal{A}_0$ is strongly-algebrable.
\end{proof}

\section{Further results and final remarks}
1. Observe that in the proof of Theorem \ref{S0} we have sequences converging to zero not only uniformly on compacta, but also almost uniformly to zero. Recall that $f_n \to f$ {\em almost uniformly} (on $[0,+\infty)$) if, for every $\varepsilon > 0$, there exists a set $E\subset[0,+\infty)$ with $m(E)\leq \varepsilon$ such that $f_n\to f$ uniformly on $[0,+\infty)\setminus E$, where $m$ denotes the Lebesgue measure. Indeed, following the notation of the proof of Theorem \ref{S0}, we can define for each $n\in\N$ the set
$$
E_n := \text{support}(f_{n,t}) = \bigcup_{m = n}^{\infty} \left( 1+m+t- \dfrac{1}{2^{m+1}}, 1 +m+t+ \dfrac{1}{2^{m+1}} \right).$$
It is clear that, for each $n\in\N$, $E_{n+1}\subset E_n$ and $m(E_n) = \sum_{m=n}^{\infty} \frac{1}{2^m} \to0$ ($n\to\infty$).

So, given $\varepsilon > 0$, it is possible to find $N \in \N$ such that $m(E_N) < \varepsilon$ and $f_{n,t}(x)=0$ for all $n\ge N$ and all $x\in[0,+\infty)\setminus E_N$.

 In this sense, the result in Theorem \ref{S0} is sharp, since it is not possible to get the uniform convergence, due to the unboundedness condition. Moreover, we cannot get the uniform convergence almost everywhere in $[0,+\infty)$ (where we say that {\em $f_n\to f$ uniformly almost everywhere} whenever there is $E\subset[0,+\infty)$ with $m(E)=0$ such that $f_n\to f$ uniformly on $[0,+\infty)\setminus E$) as is proved in the next proposition.
    \begin{proposition}
    Let $(f_n)_n \subset \mathcal{C}([0,+\infty))$ such that $f_n \to 0$ pointwise on $[0,+\infty)$ and each $f_n$ is unbounded. Then $f_n \not \to 0$ uniformly almost everywhere in $[0,+\infty)$.
    \end{proposition}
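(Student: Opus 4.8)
The plan is to argue by contradiction, exploiting the tension between continuity and the requirement that a null set be \emph{removed} in order to achieve uniform convergence. Suppose, then, that $f_n \to 0$ uniformly almost everywhere on $[0,+\infty)$; that is, there exists a set $E \subset [0,+\infty)$ with $m(E) = 0$ such that $f_n \to 0$ uniformly on $[0,+\infty) \setminus E$.

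The first step is to observe that $[0,+\infty) \setminus E$ is dense in $[0,+\infty)$: a set of Lebesgue measure zero contains no nondegenerate interval, so its complement meets every such interval. The second step is to invoke the definition of uniform convergence on $[0,+\infty)\setminus E$ with, say, $\varepsilon = 1$, obtaining an index $N \in \N$ with $|f_N(x)| \le 1$ for every $x \in [0,+\infty)\setminus E$. The third and final step is a routine density-plus-continuity argument: since $f_N$ is continuous on $[0,+\infty)$ and is bounded by $1$ on the dense set $[0,+\infty)\setminus E$, it follows that $|f_N(x)| \le 1$ for all $x \ge 0$. Hence $f_N$ is bounded, which contradicts the hypothesis that every $f_n$ is unbounded, and the proof is complete.

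I do not expect a genuine obstacle in this argument; the whole point is that continuity propagates the eventual uniform bound from the dense set $[0,+\infty)\setminus E$ to the entire half-line, and the only item worth stating explicitly is the (immediate) fact that a null subset of $[0,+\infty)$ has dense complement. It is worth remarking that the pointwise convergence hypothesis plays no essential role here: the contradiction is already produced by a single $f_N$, so what is really being used is merely that uniform convergence off a null set cannot coexist with unboundedness of continuous functions.
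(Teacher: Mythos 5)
Your proof is correct and follows essentially the same route as the paper's: both argue by contradiction, note that a set of measure zero has dense complement, and use continuity to propagate the uniform bound $|f_N|\le 1$ from $[0,+\infty)\setminus E$ to all of $[0,+\infty)$, contradicting unboundedness. The only cosmetic difference is that the paper runs the continuity step as an explicit estimate at a point $x_n$ with $|f_n(x_n)|>2$ approximated by a nearby $w_n\notin E$, whereas you phrase it as the bound extending over the closure of a dense set; your remark that the pointwise-convergence hypothesis is not actually needed is also accurate.
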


    \begin{proof} By way of contradiction, assume that there exists a set $E \subset [0,+\infty)$ with $m(E) = 0$ such that $f_n\to 0$ uniformly in $[0,+\infty) \setminus E$. Then, for $\varepsilon = 1$, there exists $n_0 \in \N$ such that for all $n \geq n_0$ and all $x \in [0,+\infty) \setminus E$ it holds that $|f_n(x)| \leq 1$.

    Since $f_n$ is unbounded for each $n \in \N$, there exists $x_n\in[0,+\infty)$ such that $|f_n(x_n)|>2$, so $x_n\in E$, for $n\ge n_0$. Now, the continuity of each $f_n$ and the fact that $m(E)=0$ guarantee the existence of points $w_n\notin E$ but near enough to $x_n$ (note that $m(E)=0$ implies the denseness of $[0,+\infty)\setminus E$) such that $|f_n(w_n) - f_n(x_n)| < \frac{1}{2}$. But then, for $n\ge n_0$ we get
    $$1\ge |f_n(w_n)|\ge|f_n(x_n)|-|f_n(x_n)-f_n(w_n)|>2-\frac{1}{2}=\frac{3}{2},$$
    which is a contradiction.
    \end{proof}

\bigskip

2. Let $\alpha : [0,+\infty) \longrightarrow [1,+\infty)$ be a continuous and nondecreasing function. We say that $f \in \mathcal{C}([0,+\infty))$ has {\em growth} $\alpha$ if
$$\limsup_{x \rightarrow +\infty} \dfrac{|f(x)|}{\alpha(x)} = +\infty.$$

If we modify the sequence constructed in Example \ref{ejemplo1} (and Theorems \ref{T1} and \ref{T2}) by using triangles of height $\alpha(n)$ and bases $\frac{1}{\alpha(n)2^n}$ and, in the same way, the curved triangles of Theorem \ref{T3}, we can adapt the proof of all results of Section 2 to get the next more general theorem.

\begin{theorem}\label{T4} Let $\alpha : [0,+\infty) \longrightarrow [1,+\infty)$ be a continuous and nondecreasing function. The family $\mathcal{A}(\alpha)$ of continuous and integrable functions in $[0,+\infty)$ with growth $\alpha$ is maximal dense-lineable and strongly algebrable.
\end{theorem}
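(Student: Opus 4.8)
The plan is to re-run the three arguments of Section~2 in turn, replacing the elementary triangles by triangles whose heights are amplified by a power of $\alpha$ and whose bases are shrunk correspondingly so that the $L^1$-norms stay finite. The delicate point --- and, I expect, the only real obstacle --- is that ``growth $\alpha$'' is neither translation invariant nor automatically stable under taking $p$-th powers when $\alpha$ increases very fast (note that the prime-logarithm exponents include $\log 2<1$); hence the heights must be chosen large enough relative to $\alpha$ to survive a translation by $t\in[0,\tfrac18)$ and the formation of the monomials $m(F_1,\dots,F_s)$, while still being thin enough at the base that every $g^\alpha_p$ remains integrable. Once the right amplification is found, the computations of Theorems~\ref{T1}, \ref{T2} and \ref{T3} go through essentially verbatim.

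\emph{Maximal lineability.} Put $H_n:=n\,\alpha(n+1)$, $w_n:=\tfrac{1}{H_n 2^{n+1}}$, let $T^\alpha_n$ be the isosceles triangle of height $H_n$ on $[n-w_n,n+w_n]$, and $h^\alpha:=\sum_{n\ge1}T^\alpha_n$. Then $\int T^\alpha_n=2^{-n-1}$ gives $h^\alpha\in L^1$, the bound $2w_n\le 2^{-n}$ keeps the supports disjoint with gaps bounded below by $\tfrac58$ (so $h^\alpha$ is continuous), and $h^\alpha(n)/\alpha(n)=H_n/\alpha(n)\ge n\to+\infty$, so $h^\alpha\in\mathcal A(\alpha)$. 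For $t\in[0,\tfrac18)$ set $h^\alpha_t(x):=h^\alpha(x-t)$; since $\alpha$ is nondecreasing and $t<1$, $h^\alpha_t(n+t)/\alpha(n+t)\ge H_n/\alpha(n+1)=n\to+\infty$, so $h^\alpha_t\in\mathcal A(\alpha)$. As in Theorem~\ref{T1}, the family $\{h^\alpha_t:t\in[0,\tfrac18)\}$ is linearly independent (by a test point that hits only the first triangle of the leftmost translate) and any nontrivial finite combination, evaluated at $x_n=n+t_s$ with $t_s$ the largest parameter, has growth $\alpha$; hence $M:=\mathrm{span}\{h^\alpha_t:t\in[0,\tfrac18)\}$ is $\mathfrak c$-dimensional and $M\setminus\{0\}\subset\mathcal A(\alpha)$.

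\emph{Maximal dense-lineability.} I would reuse verbatim the dense vector space $B$ from the proof of Theorem~\ref{T2}; its members are bounded and its density in $X$ never invoked $\alpha$. Since $\alpha\ge1$, every function in $\mathcal A(\alpha)$ is unbounded, so $\mathcal A(\alpha)\cap B=\varnothing$; and if $f\in\mathcal A(\alpha)$, $b\in B$ with $|b|\le C$, then $|f(x)+b(x)|/\alpha(x)\ge|f(x)|/\alpha(x)-C$, whence $\mathcal A(\alpha)+B\subset\mathcal A(\alpha)$. Together with the maximal lineability just proved, Lemma~\ref{denso} yields maximal dense-lineability.

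\emph{Strong algebrability.} I would imitate the proof of Theorem~\ref{T3} using curved triangles $T^\alpha_{n,p}$ equal, on $[n-w_n,n]$, to $\bigl(c_n\,\ell_n(x)\bigr)^p$, where $\ell_n$ is the affine bijection $[n-w_n,n]\to[0,1]$ (and symmetrically on $[n,n+w_n]$), with $c_n:=n\,\alpha(n)^2$ and $w_n:=(2c_n)^{-n}$. Then $\int T^\alpha_{n,p}=\tfrac{2c_n^p w_n}{p+1}$ and $c_n^p w_n\le 2^{-n}$ for $n>p+1$, so $g^\alpha_p:=\sum_nT^\alpha_{n,p}\in\mathcal C([0,+\infty))\cap L^1$ for every $p>0$, with disjoint supports. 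Setting $F_j:=g^\alpha_{\log p_j}$ for $(p_j)_j$ the primes, the computation of Theorem~\ref{T3} (with $n$ replaced by $c_n$ and $2^{n+1}x+(1-n2^{n+1})$ by $\ell_n$) gives $m(F_1,\dots,F_s)=g^\alpha_{\log(\mathrm{ind}_p(m))}$ for every non-constant monomial $m$, so a nontrivial polynomial combination $F=\sum_i\lambda_i m_i(F_1,\dots,F_s)$ satisfies $F(n)=\sum_i\lambda_i\,c_n^{\log(\mathrm{ind}_p(m_i))}$. By Remark~\ref{obs1} these exponents are pairwise distinct and all $\ge\log2$; writing $d$ for the largest one, $c_n\to+\infty$ makes $|F(n)|$ asymptotic to a nonzero multiple of $c_n^d$, and $c_n^d/\alpha(n)=n^d\,\alpha(n)^{2d-1}\ge n^d\to+\infty$ since $2d-1\ge2\log2-1>0$ and $\alpha\ge1$. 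Hence $|F(n)|/\alpha(n)\to+\infty$, so $F\in\mathcal A(\alpha)$ and in particular $F\not\equiv0$; thus the algebra generated by $(F_j)_j$ lies in $\mathcal A(\alpha)\cup\{0\}$, the system $(F_j)_j$ is algebraically free, and $\mathcal A(\alpha)$ is strongly algebrable.
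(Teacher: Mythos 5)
Your proposal is correct and follows the same route the paper sketches (replace the triangles of Example \ref{ejemplo1} and Theorem \ref{T3} by taller, thinner ones and rerun the three proofs of Section 2 verbatim), but your quantitative choices are a genuine improvement on the paper's one-line recipe, and in fact they repair it. The paper prescribes ``triangles of height $\alpha(n)$ and bases $\frac{1}{\alpha(n)2^n}$,'' which cannot work as literally stated: it gives $|f(n)|/\alpha(n)=1$, so the $\limsup$ is $1$ rather than $+\infty$. Your height $H_n=n\,\alpha(n+1)$ fixes this and, by evaluating against $\alpha(n+1)$ rather than $\alpha(n)$, also handles the fact that growth $\alpha$ is not translation invariant when $\alpha$ grows fast --- a point the paper's sketch ignores. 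More importantly, in the algebrability step you correctly identify that the smallest exponent occurring is $\log 2<1$, so a height of order $n\,\alpha(n)$ would give $c_n^{\log 2}/\alpha(n)=n^{\log 2}\alpha(n)^{\log 2-1}$, which can tend to $0$ for rapidly growing $\alpha$ (e.g.\ $\alpha(x)=e^{e^x}$, precisely the example the paper advertises); your choice $c_n=n\,\alpha(n)^2$ with $2\log 2-1>0$ is exactly the kind of amplification needed. All the remaining verifications (integrability via $c_n^{p}w_n\le 2^{-n}$ for $n>p$, disjointness of supports, the identity $m(F_1,\dots,F_s)=g^{\alpha}_{\log(\mathrm{ind}_p(m))}$, the dominant-exponent argument, and the reuse of $B$ and Lemma \ref{denso}) check out. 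In short: same approach, but your version is the one that actually proves the theorem for arbitrary nondecreasing $\alpha\ge 1$.
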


The last theorem is a generalization of those in Section 2 just by taking $\alpha(x) \equiv 1$. Furthermore, we are able to state a large algebraic structure of families of continuous and integrable functions that grow exponentially or even faster: choose $\alpha(x) = e^x$, $\alpha(x) = e^{e^x}$, etc.

Following the same steps as in Section 3, and taking into account the preceding remark, we can also establish the large algebraic structure of the set of sequences of functions in ${\mathcal A}(\alpha)$ converging to zero in $L^1$-norm and almost uniformly in $[0,+\infty)$.

\begin{theorem}\label{T5} Let $\alpha : [0,+\infty) \longrightarrow [1,+\infty)$ be a continuous and nondecreasing function. The family ${\mathcal A}_0(\alpha)$ of sequences $(f_n)_n$ of continuous and integrable functions with growth $\alpha$ such that $f_n\to0$ in $L_1$-norm and almost uniformly in $[0,+\infty)$ is maximal dense-lineable and strongly algebrable.
\end{theorem}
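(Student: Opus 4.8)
The plan is to lift the two constructions of Section~3 --- the translated ``tails of a series'' behind Theorem~\ref{S0}, and the curved-triangle algebra behind the strong algebrability of $\mathcal{A}_0$ --- to the $\alpha$-weighted setting, using exactly the modification already introduced for Theorem~\ref{T4}. Fix, as in the proof of Theorem~\ref{T4}, the curved triangles $T_{n,p}^{\alpha}$ ($p>0$, $n\in\N$): they are supported on $[\,n-w_n,\,n+w_n\,]$ with $w_n\le 2^{-n}$ and peak value $\phi(n)^{p}$ at $x=n$, where the $w_n$ are small enough that $\sum_n\phi(n)^{p}w_n<+\infty$ for each fixed $p$ and consecutive supports stay a fixed distance apart, and where $\phi:\N\to[1,+\infty)$ is chosen (as in Theorem~\ref{T4}) so large that $\phi(n)^{q}$ dominates $\alpha$ throughout the narrow supports, for every $q>0$ (in particular for $q=\log 2$). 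Put $g_q^{\alpha}:=\sum_{n\ge1}T_{n,q}^{\alpha}$; then $g_q^{\alpha}\in\mathcal{A}(\alpha)$ for all $q>0$ by Theorem~\ref{T4}, and, exactly as in \eqref{m es g}, $m(g_{\log p_{1}}^{\alpha},\dots,g_{\log p_{s}}^{\alpha})=g_{\log(\text{ind}_p(m))}^{\alpha}$ for every monic monomial $m$. Write $T_m^{\alpha}:=T_{m,1}^{\alpha}$ for the ordinary $\alpha$-triangles.

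\emph{Maximal dense-lineability.} Set $f_n^{\alpha}(x):=\sum_{m\ge n}T_m^{\alpha}(x)$ and, for $t\in[0,\tfrac18)$, $f_{n,t}^{\alpha}(x):=f_n^{\alpha}(x-t)$. As for Theorem~\ref{S0}: each $f_{n,t}^{\alpha}$ is continuous and integrable (since $\sum_m\|T_m^{\alpha}\|_{L^1}<+\infty$) and has growth $\alpha$ (on the support of the $m$-th triangle $\alpha$ is essentially constant, being nondecreasing on a narrow interval, while the peak value $\phi(m)$ dominates it by the choice of $\phi$); the sequence $(f_{n,t}^{\alpha})_n$ tends to $0$ in $d_X$ (its $L^1$-norm $\sum_{m\ge n}\|T_m^{\alpha}\|_{L^1}\to0$ and $\|f_{n,t}^{\alpha}\|_{\infty,k}=0$ once $n>k$) and almost uniformly (the supports $E_n:=\bigcup_{m\ge n}\text{supp}(T_m^{\alpha})$, translated by $t$, are nested with $m(E_n)=\sum_{m\ge n}2w_m\to0$ and $f_{n,t}^{\alpha}\equiv0$ off $E_n$, exactly as in the first remark of Section~4). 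Linear independence of $\{(f_{n,t}^{\alpha})_n:\,t\in[0,\tfrac18)\}$ and the inclusion $M_0(\alpha)\setminus\{0\}\subset\mathcal{A}_0(\alpha)$, where $M_0(\alpha):=\text{span}\{(f_{n,t}^{\alpha})_n\}$, follow from the peak-separation argument of Theorem~\ref{T1} applied to the first component (a nonzero combination $\sum_k c_kf_{n,t_k}^{\alpha}$ equals $|c_j|\phi(m)$ at the peaks of the $t_j$-triangles when $c_j\ne0$ and $m$ is large, so it has growth $\alpha$, and convergence in $d_X$ and almost uniform convergence pass to finite sums). Hence $\mathcal{A}_0(\alpha)$ is maximal lineable in $c_0(X)$ (recall $\dim c_0(X)=\mathfrak c$). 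To upgrade to density, let $B$ be the dense vector subspace of bounded functions $b_{p,n,\gamma}$ from the proof of Theorem~\ref{T2}; by Lemma~\ref{denalg}, $c_{00}(B)$ is a dense vector subspace of $c_0(X)$. A sequence in $c_{00}(B)$ is eventually $0$ with all terms bounded, so it is not in $\mathcal{A}_0(\alpha)$; and if $(f_n)_n\in\mathcal{A}_0(\alpha)$ and $(b_n)_n\in c_{00}(B)$, then $(f_n+b_n)_n$ has every term continuous, integrable and of growth $\alpha$ (adding a bounded function leaves $\limsup|\cdot|/\alpha$ unchanged, since $\alpha\ge1$), and it still converges to $0$ in $d_X$ and almost uniformly (as $b_n=0$ for large $n$); thus $\mathcal{A}_0(\alpha)+c_{00}(B)\subset\mathcal{A}_0(\alpha)$ and $\mathcal{A}_0(\alpha)\cap c_{00}(B)=\varnothing$. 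Since $(c_0(X),d_{c_0(X)})$ is metrizable, Lemma~\ref{denso} gives that $\mathcal{A}_0(\alpha)$ is maximal dense-lineable.

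\emph{Strong algebrability.} For $j,n\in\N$ set $F_{j,n}^{\alpha}(x):=\sum_{m\ge n}T_{m,\log p_j}^{\alpha}(x)$, the $n$-th tail of $g_{\log p_j}^{\alpha}$, and let $\mathcal{B}_0(\alpha)$ be the algebra without constant term generated by $\{(F_{j,n}^{\alpha})_n:\,j\in\N\}$. Disjointness of the supports gives, for any monic monomial $M=\prod_{i=1}^s x_i^{\alpha_i}$ and exactly as in \eqref{monomio2}, $M(F_{1,n}^{\alpha},\dots,F_{s,n}^{\alpha})=\sum_{m\ge n}T_{m,\log(\text{ind}_p(M))}^{\alpha}$, i.e.\ the $n$-th tail of $g_{\log(\text{ind}_p(M))}^{\alpha}$; hence each of its components lies in $\mathcal{A}(\alpha)$ and tends to $0$ in $d_X$ and almost uniformly, so $M((F_{1,n}^{\alpha})_n,\dots,(F_{s,n}^{\alpha})_n)\in\mathcal{A}_0(\alpha)$. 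For a nontrivial polynomial $P=\sum_i\lambda_iM_i$ without constant term, evaluating the $n$-th component at an integer $m\ge n$ yields $P(F_{1,n}^{\alpha},\dots,F_{s,n}^{\alpha})(m)=\sum_i\lambda_i\,\phi(m)^{\log(\text{ind}_p(M_i))}$; by Remark~\ref{obs1} the exponents are distinct positive reals, so this sum is asymptotic to $\lambda_{i_0}\phi(m)^{c}$ with $c$ the largest one carrying a nonzero coefficient, whence $|P(\cdots)(m)|/\alpha(m)\to+\infty$ (using that $\phi(m)^{c}$ dominates $\alpha$ and that the lower terms are $o(\phi(m)^{c})$). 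Therefore every nonzero element of $\mathcal{B}_0(\alpha)$ is a sequence in $\mathcal{A}_0(\alpha)$, the generators are algebraically independent, and $\mathcal{A}_0(\alpha)$ is strongly algebrable.

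\emph{Main obstacle.} The assembly above is routine once Section~3 and Theorem~\ref{T4} are in hand; the one delicate point --- already settled inside Theorem~\ref{T4} --- is that \emph{every} operation must preserve growth $\alpha$. The tails $f_n^{\alpha}$ and $F_{j,n}^{\alpha}$ keep it because on the shrinking support of the surviving triangle the nondecreasing $\alpha$ is essentially constant, so the large peak $\phi(m)^{p}$ still dominates $\alpha$ there. In the algebra the subtlety is that the exponents $\log(\text{ind}_p(M_i))$ may be smaller than $1$ (the least possible value being $\log 2$), so the height sequence $\phi$ must be fixed from the outset so that $\phi(n)^{\log 2}$ already dominates $\alpha$ on the supports; this is precisely the condition carried by the construction behind Theorem~\ref{T4}, and with it every product and polynomial in the $g_q^{\alpha}$ again outgrows $\alpha$. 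The remaining points --- $\dim c_0(X)=\mathfrak c$ and the verification of the hypotheses of Lemma~\ref{denso} for $(c_0(X),d_{c_0(X)})$ --- are immediate.
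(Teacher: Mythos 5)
Your proposal is correct and follows exactly the route the paper intends for Theorem~\ref{T5}: the paper gives no detailed proof here, merely asserting that one combines the Section~3 constructions (translated tails for dense-lineability, prime-indexed curved triangles for strong algebrability) with the $\alpha$-modified triangles of Theorem~\ref{T4}, which is precisely your assembly. You even address carefully the one point the paper's sketch glosses over --- the peak heights must be chosen so that even the smallest possible exponent $\log 2$ still dominates $\alpha$ (the paper's stated heights ``$\alpha(n)$'' would not by themselves give $\limsup|f|/\alpha=+\infty$) --- so nothing essential is missing.
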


\bigskip

3. If we modify the construction of Example \ref{ejemplo1} by changing the triangles $T_n$ to exponential-like functions, for instance $$\Phi_n(x):=\begin{cases}
  n\cdot e^{
  1-\frac{1}{1-\left(2^{n+1}(x-n)\right)^2}
  }&\text{if }x\in \left( n-\frac{1}{2^{n+1}}, n+\frac{1}{2^{n+1}}\right)\\0&\text{otherwise,}
\end{cases}$$ we get for every $n\in\N$ an integrable, unbounded and ${\mathcal C}^\infty$ function on $[0,+\infty)$. Hence a similar argument to the one given in Theorem \ref{T1} allows us to state the maximal lineability of
$$
{\mathcal A}^\infty :=\big\{f\in{\mathcal C}^\infty([0,+\infty))\cap L^1([0,+\infty)):\, \limsup_{x\to+\infty}|f(x)|=+\infty\big\}.$$
Moreover, because $\|\Phi_n\|_{L^1}\le \frac{1}{2^n}$, we also obtain the maximal lineability of the set ${\mathcal A}_0^\infty$ of sequences $(f_n)_n$ of functions in ${\mathcal A}^\infty$ such that $f_n\to0$ in $L^1$-norm and almost uniformly in $[0,\infty)$. For the dense-lineability of both sets, we need first to state an adequate metric in $Y:=\mathcal{C}^\infty([0,+\infty))\cap L^1([0,+\infty))$ and $c_0(Y)$; so we left it, together with the strong algebrability, as an open problem.

\section*{Acknowledgment}
The authors would kindly acknowledge Prof.~Bernal-Gonz\'alez for his helpful and useful comments and suggestions.

\end{document}